\numberwithin{equation}{section}
\theoremstyle{plain}
\newtheorem{theorem}{Theorem}[]
\newtheorem{thmx}{Theorem}
\newtheorem{lemma}[theorem]{Lemma}
\newtheorem{proposition}[theorem]{Proposition}
\newtheorem{corollary}[theorem]{Corollary}
\theoremstyle{definition}
\newtheorem{definition}[theorem]{Definition}
\newtheorem{example}[theorem]{Example}
\newtheorem{noname}[theorem]{}
\newtheorem{subnoname}{}[theorem]
\newtheorem{remark}[theorem]{Remark}
\newtheorem{construction}[theorem]{Construction}
\newtheorem{notation}[theorem]{Notation}
\theoremstyle{remark}
\newtheorem*{smallremark}{Remark}
\newtheorem{case}{Case} \makeatletter \@addtoreset{case}{theorem}\makeatother
\newtheorem{claim}{Claim} 
\newcommand{\bthm}{\begin{theorem}}
\newcommand{\bprop}{\begin{proposition}}
\newcommand{\blem}{\begin{lemma}}
\newcommand{\bcor}{\begin{corollary}}
\newcommand{\brem}{\begin{remark}}
\newcommand{\bdfn}{\begin{definition}}
\newcommand{\bitem}{\begin{itemize}}
\newcommand{\benum}{\begin{enumerate}}
\newcommand{\bex}{\begin{example}}
\newcommand{\bno}{\begin{noname}}
\newcommand{\bsno}{\begin{subnoname}}
\newcommand{\bsrem}{\begin{smallremark}}
\newcommand{\bnot}{\begin{notation}}
\newcommand{\bcon}{\begin{construction}}
\newcommand{\bca}{\begin{case}}
\newcommand{\bcl}{\begin{claim}}
\newcommand{\beq}{\begin{equation}}
\newcommand{\bpf}{\begin{proof}}
\newcommand{\epf}{\end{proof}}
\newcommand{\eeq}{\end{equation}}
\newcommand{\ecl}{\end{claim}}
\newcommand{\eca}{\end{case}}
\newcommand{\econ}{\end{construction}}
\newcommand{\enot}{\end{notation}}
\newcommand{\esrem}{\end{smallremark}}
\newcommand{\eno}{\end{noname}}
\newcommand{\esno}{\end{subnoname}}
\newcommand{\eex}{\end{example}}
\newcommand{\eitem}{\end{itemize}}
\newcommand{\eenum}{\end{enumerate}}
\newcommand{\ethm}{\end{theorem}}
\newcommand{\eprop}{\end{proposition}}
\newcommand{\elem}{\end{lemma}}
\newcommand{\ecor}{\end{corollary}}
\newcommand{\erem}{\end{remark}}
\newcommand{\edfn}{\end{definition}}
\newcommand{\ble}{\begin{lemma}}
\newcommand{\ele}{\end{lemma}}
\newcommand{\ov}{\overline}
\newcommand{\wt}{\widetilde}
\def\8{\infty}
\def\.{\cdot}
\def\PP{\mathbb{P}}
\def\F{\mathbb{F}}
\def\C{\mathbb{C}}
\def\Z{\mathbb{Z}}
\def\Q{\mathbb{Q}}
\def\E{\widehat{E}}
\def\xra{\xrightarrow}
\def\raa{\xrightarrow{\ \ }}
\def\:{\colon}
\def\map{\dashrightarrow}
\def\ssk{\smallskip}
\def\Sing{\operatorname{Sing}}
\def\dim{\operatorname{dim}}
\def\Sing{\operatorname{Sing}}
\def\dim{\operatorname{dim}}
\def\id{\operatorname{id}}
\def\qt{\operatorname{qt}}
\newcommand{\ti}{\tilde}
\ifpdf \usepackage[linkbordercolor={0 0 1}]{hyperref} \else \usepackage[hypertex,linkbordercolor={0 0 1}]{hyperref} \fi
\begin{document}

\title[Theorems of Lin-Zaidenberg and Abhyankar-Moh-Suzuki]{A new proof of the theorems of Lin-Zaidenberg \\and Abhyankar-Moh-Suzuki}

\author[Karol Palka]{Karol Palka}
\address{Karol Palka: Institute of Mathematics, Polish Academy of Sciences, ul. \'{S}niadeckich 8, 00-656 Warsaw, Poland}\email{palka@impan.pl}

\begin{abstract} Using the theory of minimal models of quasi-projective surfaces we give a new proof of the theorem of Lin-Zaidenberg which says that every topologically contractible algebraic curve in the complex affine plane has equation $X^n=Y^m$ in some algebraic coordinates on the plane. This gives also a proof of the theorem of Abhyankar-Moh-Suzuki concerning embeddings of the complex line into the plane. Independently, we show how to deduce the latter theorem from basic properties of $\Q$-acyclic surfaces.\end{abstract}

\thanks{The author was supported by the Polish Ministry of Science and Higher Education, the Iuventus Plus grant, No. 0382/IP3/2013/72}

\subjclass[2000]{Primary: 14H50; Secondary: 14R10}
\keywords{contractible curve, affine plane, Kodaira dimension, cusp}

\maketitle

The following result is a homogeneous formulation of the theorems proved by Lin-Zaidenberg \cite{LinZaidenberg} and Abhyankar-Moh \cite{AbhMoh_the_line_thm} and Suzuki \cite{Suzuki_AMSthm}. Curves are assumed to be irreducible.

\begin{thmx} If a complex algebraic curve in $\C^2$ is topologically contractible then in some algebraic coordinates $\{x,y\}$ on $\C^2$ its equation is $x^n=y^m$ for some coprime $n>m>0$.
\end{thmx}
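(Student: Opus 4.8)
The plan is to first read off the shape of $C$ from contractibility, then analyse the affine surface $V:=\C^2\setminus C$ by the logarithmic minimal model program, and finally turn the resulting ruling of $V$ into the coordinates $\{x,y\}$. Contractibility makes $C$ simply connected with $H_1(C)=0$, which forbids multibranch points; hence every singularity of $C$ is unibranch, the normalization $\nu\colon\mathbb{A}^1\to C$ is a homeomorphism, and $C$ is the injective image of $\mathbb{A}^1$ with a single place at infinity. In particular $\chi(C)=1$, so by additivity $\chi(V)=\chi(\C^2)-\chi(C)=0$; a Mayer--Vietoris/Alexander-duality computation then shows $V$ is a $\Q$-homology $\C^*$, with $H_1(V;\Q)=\Q$ generated by a meridian of $C$ and $H_i(V;\Q)=0$ for $i\ge 2$. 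The goal becomes to find coordinates realizing $C$ as $\{x^n=y^m\}$.

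Next I would complete and run the log MMP. The minimal log resolution of $(\PP^2,\ov C+L)$, with $L$ a line at infinity, gives a smooth pair $(X,D)$ with $D$ an SNC divisor and $X\setminus D\iso V$, so $\ovk(V)=\kp(X,K_X+D)$, and $V$ is a smooth \emph{affine} surface (the complement of the zero set of one polynomial). The dichotomy is on $\ovk(V)$. If $\ovk(V)=-\infty$, then by Miyanishi--Sugie $V$ is affine-ruled, i.e.\ carries an $\mathbb{A}^1$-fibration; the constraints $H_1(V;\Q)=\Q$ and $\chi(V)=0$ force the base to be $\C^*$, and one checks the fibration extends to $\C^2$ with $C$ as a fibre, hence a coordinate line. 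This is the case $m=1$ and recovers Abhyankar--Moh--Suzuki. (For this case one can argue independently and more directly, using that an auxiliary fibre turns $V$ into a $\Q$-acyclic surface.)

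If instead $\ovk(V)\ge 0$ --- the genuinely singular case, where already $y^2=x^3$ gives $\ovk(V)=1$ --- the Iitaka fibration of $(X,D)$ exhibits $V$ as a $\C^*$-fibred (Seifert) surface over a curve. Here I would use the topology to force the model configuration: one place at infinity, $H_1(V;\Q)=\Q$ and unibranchedness should pin the base down to $\mathbb{A}^1$ carrying exactly two multiple fibres of coprime multiplicities $m<n$, with $C$ the closure of the single missing full fibre. This is precisely the orbit picture of the weighted action $(x,y)\mapsto(t^mx,t^ny)$, whose orbit closure is $\{x^n=y^m\}$. To finish I would produce this $\C^*$-action on $\C^2$ from the fibration and invoke the classical linearizability of $\C^*$-actions on $\C^2$, obtaining coordinates in which $C=\{x^n=y^m\}$.

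The main obstacle is the fibre bookkeeping in the second case: ruling out extra degenerate or multiple fibres and proving the base is the whole affine line with \emph{exactly} two multiple fibres. This is where contractibility must be used quantitatively, combining the $\Q$-homology-$\C^*$ constraint with the logarithmic Bogomolov--Miyaoka--Yau inequality and an Euler-characteristic count on the boundary $D$. A secondary difficulty is globalizing the ruling of $V$ to a structure on $\C^2$: either the $\C^*$-action above, or --- to avoid group actions --- a pencil on $\PP^2$ whose general member becomes a line after a Cremona map determined by the local configuration at the cusp and at infinity, after which reading off the two special members yields the equation $x^n=y^m$.
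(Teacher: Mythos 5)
Your plan is the classical Gurjar--Miyanishi/Koras route (work directly with $V=\C^2\setminus A$ and run a dichotomy on $\ovk(V)$), which is genuinely different from the paper's approach; but as written it has a real gap at its central step. The sentence ``if $\ovk(V)\ge 0$ \dots the Iitaka fibration of $(X,D)$ exhibits $V$ as a $\C^*$-fibred (Seifert) surface'' is only true when $\ovk(V)=1$: you have silently discarded the cases $\ovk(V)=0$ and, crucially, $\ovk(V)=2$, and excluding log general type is the hard core of the Lin--Zaidenberg theorem. Your quantitative input $\chi(V)=0$ does not kill it: on an almost minimal model the log BMY inequality reads $0<\tfrac13\bigl((K+B^+)\bigr)^2\le \chi+\sum_{T\in\qt(B)}1/|\Gamma(T)|$, and the quotient-singularity correction terms are positive, so with $\chi=0$ no contradiction falls out; closing that room requires the long case analyses of Gurjar--Miyanishi or Koras, for which your proposal offers no substitute (you cite log BMY only for ``fibre bookkeeping'' inside the $\C^*$-fibred case, which is a downstream issue). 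This is exactly the point of the paper's ``basic new ingredient'': instead of $V$ it studies the enlarged surface $X=(\C^2\setminus A)\cup C$, where $C$ comes from the last $(-1)$-curve of the resolution at infinity, so that $\chi(X)=-1$; then log BMY (in Langer's version, valid for all $\kappa\ge 0$) gives a quick contradiction in Claim 7, proving $\kappa(X)=-\8$ once and for all, and a single $\PP^1$-fibration (Theorem \ref{thm:main}) handles the smooth and singular cases uniformly, with no dichotomy on $\ovk(V)$ and no classification of $\C^*$-fibrations.

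Two further soft spots. In the $\ovk(V)=-\8$ branch, ``one checks the fibration extends to $\C^2$ with $C$ as a fibre'' is essentially the entire content of Abhyankar--Moh--Suzuki in that case and needs an argument (e.g.\ that a fibration $V\to\C^*$ is given by a unit of $\cal{O}(V)$, hence by $\lambda f^k$ with $f$ an equation of $A$, and then that $f$ itself is an $\mathbb{A}^1$-fibration); your homological reduction of the base to $\C^*$ is fine, but the extension step is not free. In the endgame, a $\C^*$-fibration --- even an untwisted Seifert one --- is not automatically the orbit map of a $\C^*$-action, and you construct no action on $\C^2$ stabilizing $A$, so ``invoke the classical linearizability'' does not yet apply; your alternative Cremona/pencil route is the viable one, and it is in fact what the paper executes: after contracting $U$ and performing elementary transformations it moves the two $(-1)$-curves $L_1,L_2$ of the singular fibers onto a pair of lines, i.e.\ coordinate axes $\{x_1=0\},\{x_2=0\}$, and reads off $x_1^n/x_2^m$ as the coordinate on the base with $n=d(V_1)$, $m=d(V_2)$, coprimality coming from the discriminant relation $d=-1$ for boundaries of $\C^2$. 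So the skeleton of your second case matches the paper's endgame, but the two decisive steps --- ruling out $\ovk(V)\in\{0,2\}$ and pinning down exactly two multiple fibers over $\mathbb{A}^1$ --- are asserted rather than proved.
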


The part proved by Lin-Zaidenberg concerns singular curves ($m\geq 2$). The part proved by Abhyankar-Moh and Suzuki concerns smooth curves ($m=1$) and is usually stated in the following form.

\begin{thmx} If a complex algebraic curve in $\C^2$ is isomorphic to $\C^1$ then in some algebraic coordinates $\{x,y\}$ on $\C^2$ its equation is $x=0$.
\end{thmx}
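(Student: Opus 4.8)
The plan is to obtain Theorem~B as the smooth case of Theorem~A, after first checking that a curve isomorphic to $\C^1$ is forced into that case. So let $C\subset\C^2$ be a closed curve with $C\cong\C^1$. Since $\C^1$ is contractible, $C$ is in particular topologically contractible, so Theorem~A applies and furnishes algebraic coordinates $\{x,y\}$ in which $C=\{x^n=y^m\}$ for some coprime $n>m>0$. The first step is to rule out $m\geq 2$: writing $F=x^n-y^m$, the partials $F_x=nx^{n-1}$ and $F_y=-my^{m-1}$ vanish simultaneously at the origin as soon as $n,m\geq 2$, so $\{F=0\}$ would be singular there, contradicting $C\cong\C^1$. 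As $n>m>0$, this leaves only $m=1$.

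With $m=1$ the defining equation becomes $y=x^n$. The second step is a coordinate change straightening this graph: the map $\Phi(x,y)=(\,y-x^n,\;x\,)$ is a polynomial automorphism of $\C^2$, with polynomial inverse $(u,v)\mapsto(v,\;u+v^n)$, and it carries $C=\{y-x^n=0\}$ onto the line $\{u=0\}$. Relabelling the new first coordinate as $x$ yields exactly the equation $x=0$ demanded by Theorem~B. Along this route there is essentially no obstacle of its own: the entire difficulty is already packaged inside Theorem~A, so the only content is the smoothness reduction above and the explicit automorphism.

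For the independent argument promised in the abstract I would instead work directly with the complement $S=\C^2\setminus C$, which is $\Q$-acyclic precisely because $C$ is a topologically contractible curve, and exploit its logarithmic Kodaira dimension. Concretely, I would first establish $\ovk(S)=-\infty$: a nonnegative value would impose, through the Euler-characteristic and logarithmic numerical constraints available for $\Q$-acyclic surfaces, data incompatible with $S$ being an open subset of $\C^2$; note this matches the model case, where the complement of a coordinate line is $\C^*\times\C^1$ and indeed has $\ovk=-\infty$. Negativity of $\ovk(S)$ then produces a fibration of $S$ by affine lines, and analysing how a general fibre meets $C$ together with the structure of the degenerate fibres shows that this extends to a global $\C^1$-fibration of $\C^2$ having $C$ as a fibre; since any such fibration is, up to a polynomial automorphism, a coordinate projection, the curve $C$ becomes $\{x=0\}$. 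The main obstacle on this route is the fibration analysis: proving $\ovk(S)=-\infty$ and, above all, excluding the exceptional patterns of multiple or reducible degenerate fibres, since it is exactly these that would encode a nontrivial, non-rectifiable embedding, and controlling them is where the minimal-model bookkeeping of the earlier sections carries the weight.
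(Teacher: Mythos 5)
Your derivation of Theorem B itself is correct and is exactly the paper's intended route: a closed curve isomorphic to $\C^1$ is contractible and smooth, Theorem A then forces $m=1$ (for coprime $n>m\geq 2$ the curve $x^n=y^m$ is singular at the origin), and the shear $(x,y)\mapsto (y-x^n,x)$ rectifies $y=x^n$. Nothing more is needed for the statement.

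Your sketch of the \emph{independent} proof, however, starts from a false premise: $S=\C^2\setminus C$ is \emph{not} $\Q$-acyclic. Since $H_2(\C^2,\Z)=0$ and $C$ is an irreducible divisor, $H_1(\C^2\setminus C,\Z)\cong \Z$, generated by a meridian loop; your own model case exhibits this, as $\C^2\setminus\{x=0\}\cong \C^*\times\C^1$ has $H_1\cong\Z$. So any step invoking constraints ``available for $\Q$-acyclic surfaces'' applied to $S$ fails immediately, and moreover establishing $\ovk(\C^2\setminus C)=-\infty$ is essentially the whole content of the Abhyankar--Moh--Suzuki theorem, so your plan is close to circular at that point. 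The paper's Section \ref{sec:AMS} proceeds quite differently: it keeps the completion $(\ov X,D)$ of Section \ref{sec:LZ} with the \emph{enlarged} open part $X=\ov X\setminus(D_1+D_2+E)$ (that is, $\C^2\setminus A$ together with part of the curve $C$), and the $\Q$-acyclic surface is $X'$, obtained from $X$ by contracting $E$ and $D_2$ in the case $E^2<0$. The engine is the discriminant identity $|d(D)|=d(\widehat E)\cdot |H_1(X',\Z)|^2$ of Lemma \ref{lem:Q-acyclic}, which yields that $d(D_2)$ divides $d(D_1)$, contradicting $\gcd(d(D_1),d(D_2))=1$ as computed from \eqref{eq:splitting_d}; the cases $E^2>0$ and $E^2=0$ are excluded by elementary fiber counting via \eqref{eq:Sigma} and Lemma \ref{lem:fibers}, and $D_1=0$ then gives a Hirzebruch surface and the line. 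No computation of $\ovk$, no $\C^1$-fibration of $S$, and no minimal-model argument enters there; indeed your closing remark that ``the minimal-model bookkeeping of the earlier sections carries the weight'' would defeat precisely the independence from the machinery of Section \ref{sec:LZ} that the paper claims for this second proof.
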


The Theorem B has now several published proofs using variety of methods, from algebraic to topological. The easiest we know is by Gurjar \cite{Gurjar_AMS}. As for the singular case of Theorem A, the original proof relies on Teichm\"uller theory. A topological proof based on properties of knots was given in \cite{NeumanRudolph_unfoldings, NeumanRudolph_correction}. Proofs of both theorems using algebraic geometry can be found in \cite{GurjarMiyanishi_AMS_and_LZ_thms} and \cite{Koras-ab_moh}. The latter two use the tools of the theory of open algebraic surfaces including the logarithmic Bogomolov-Miyaoka-Yau inequality established for surfaces of log general type by Kobayashi \cite{Kobayashi-uniformization} and Kobayashi-Nakamura-Sakai \cite{KobNakSak_logBMY_inequality}. Our proof of Theorem A also uses the theory of minimal models for log surfaces. We believe it is quite short and geometric. Both theorems are deduced from the following result.

\begin{theorem}\label{thm:main} If $A\subseteq \C^2$ is a topologically contractible curve then there exists a minimal smooth completion $(X,D)$ of $\C^2\setminus A$, such that the proper transform of $A$ is a fiber of a $\PP^1$-fibration of $X$, whose restriction to $\C^2\setminus A$ has irreducible fibers.
\end{theorem}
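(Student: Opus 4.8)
The plan is to realise the statement as a structural consequence of computing the logarithmic Kodaira dimension $\ovk(\C^2\setminus A)$. First I record the geometry forced by contractibility: since $A$ is topologically contractible it is simply connected with $\chi(A)=1$, so its normalisation is $\PP^1$, it has a single place at infinity, and all its singularities are unibranch (cusps); hence the proper transform $\wt A$ of $A$ in any smooth completion is a smooth rational curve. Embedding $\C^2\subseteq\PP^2$, taking closures, resolving $\ov A\cup L_\infty$ to a simple normal crossing divisor, and then contracting every superfluous $(-1)$-curve in the boundary, I obtain a minimal smooth completion $(X,D)$ of $V:=\C^2\setminus A$ with $D=\wt A+D_\infty$, where $D_\infty$ lies over the line at infinity. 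By additivity of the Euler characteristic, $\chi(V)=\chi(\C^2)-\chi(A)=1-1=0$; and since $V$ is a smooth affine (hence Stein) surface it has the homotopy type of a $2$-complex, so together with $H_1(V)\iso\Z$ (Alexander duality / the meridian of the single component $A$) this gives $b_2(V)=0$. Thus $V$ is a $\Q$-homology $\C^*$.

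Next I run the logarithmic minimal model program on the pair $(X,D)$. Each step contracts a log-exceptional curve and, since $V=X\setminus D$ is preserved, keeps a completion of $V$; the program terminates either in a Mori fibre space, forcing $\ovk(V)=-\infty$, or at a log minimal model on which $K+D^{+}$ (the log canonical divisor after peeling) is nef, i.e.\ $\ovk(V)\ge 0$. The crux is to exclude the second outcome, and here I would exploit the topological rigidity coming from contractibility: on the minimal model the Hodge index theorem together with the logarithmic Noether formula (equivalently the orbifold Bogomolov--Miyaoka--Yau inequality) expresses $(K+D^{+})^2$ and the orbifold Euler number through invariants of $V$, and the values $\chi(V)=0$, $b_2(V)=0$, combined with the severely constrained shape of the boundary $D$, make these data incompatible with nefness of $K+D^{+}$. \emph{Carrying out this exclusion is the main obstacle}: it is exactly the point where ``$A$ is contractible'' must be converted into numerical inequalities sharp enough to force $\ovk(V)=-\infty$, and where the combinatorics of the boundary on the almost minimal model must be controlled.

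Granting $\ovk(V)=-\infty$, the Miyanishi--Sugie structure theorem supplies a $\PP^1$-fibration $p\colon X\To\PP^1$, and it remains to match it to the statement. Since $V$ is affine with $\chi(V)=0$, a general fibre of $p$ meets $D_\infty$ in one or two points, so $p|_V$ is a $\C^1$- or $\C^*$-fibration. As $A$ is contractible it cannot dominate the base, so $\wt A$ is vertical and hence contained in a fibre; the vanishing $b_2(V)=0$ then rules out reducible fibres inside $V$ (an extra component in $V$ would survive in $H_2(V)$), so the fibres of $p|_V$ are irreducible. Finally, re-minimalising the completion — contracting the boundary components lying in the fibre through $A$, which become superfluous once that fibre is brought to self-intersection $0$ — collapses the whole fibre to $\wt A$. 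This realises $\wt A$ as a fibre of a $\PP^1$-fibration of $X$ whose restriction to $V$ has irreducible fibres, as required.
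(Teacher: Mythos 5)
Your proposal founders on the step you yourself flag as ``the main obstacle'': proving $\ovk(\C^2\setminus A)=-\infty$ is not merely hard, it is \emph{false} in exactly the case the theorem is about, namely singular $A$ (the Lin--Zaidenberg case). For $A=\{x^2=y^3\}$ the function $f=x^2/y^3$ is invariant under the $\C^*$-action $t\cdot(x,y)=(t^3x,t^2y)$ and exhibits $V=\C^2\setminus A$ as an untwisted $\C^*$-fibration over $\C^1$ with two multiple fibers, of multiplicities $2$ and $3$; the standard formula gives $\ovk(V)=1$ since $-1+(1-\tfrac12)+(1-\tfrac13)=\tfrac16>0$. Structurally: if one had $\ovk(V)=-\infty$, then $V$ would be affine-ruled or Platonic; Platonic is excluded because $H_1(V)\cong\Z$ is infinite, and affine-ruledness would produce a $\C^1$-fibration whose general fibers avoid $A$ (each fiber meets the connected boundary of $\C^2$ at least once and the full boundary of $V$ exactly once), so $A$ would be a fiber component of a $\C^1$-fibration of $\C^2$ and hence smooth. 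Your own numerical data explain why the BMY route cannot close this gap: with $\chi(V)=0$ the inequality $0\leq \chi(V)+\sum_{T}1/|\Gamma(T)|$ is simply satisfied, producing no contradiction with nefness of $K+D^{+}$.

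This is precisely where the paper's proof diverges from your plan, and it is advertised as the main new ingredient: one does \emph{not} work with $V$ but with the strictly larger surface $X=(\C^2\setminus A)\cup C$, where $C$ is the last curve produced by the minimal log resolution of the singularity at infinity. Then $\chi(X)=-1$ (when $D_1\neq 0$), and it is for this $X$ that $\kappa(X)=-\infty$ is proved: the negative Euler characteristic makes the log BMY inequality fail, after the equality cases are eliminated (Claims 1--7). The resulting $\C^1$-fibration lives on $X$, and its restriction to $\C^2\setminus A=X\setminus C$ is a $\C^*$-fibration with irreducible fibers --- note the theorem claims only irreducibility of fibers on $\C^2\setminus A$, not a $\C^1$-fibration there, which your chain of deductions implicitly assumes. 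Two secondary errors compound the problem: ``$A$ is contractible, so it cannot dominate the base'' is invalid ($\C^1$ dominates $\PP^1$; that the proper transform of $A$ is vertical, indeed equal to the distinguished fiber, is argued in the paper only at the very end, via discriminant computations and snc-minimality, using Lemma \ref{lem:D_for_C2_is_chain}); and $b_2(V)=0$ does not rule out reducible fibers, since fiber components inside an affine surface are noncompact and define no classes in $H_2(V)$ --- the paper instead counts components via Fujita's formula \eqref{eq:Sigma}, forcing $\Sigma_X=0$.
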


The basic new ingredient in the proof is to shift the focus from the surface $\C^2\setminus A$, where $A$ is the contractible curve, to the surface $X=(\C^2\setminus A)\cup C$, where $C$ is (some naturally defined open subset of) the last $(-1)$-curve created by the minimal log resolution of the singularity at infinity (see section \ref{sec:preliminaries}). While the boundary of $X$ is not any more connected, the important property is that in general the Euler characteristic of $X$ is negative. A similar idea was used in \cite{PaKo-general_type} and will be used in forthcoming papers (coauthored with M. Koras and P. Russell) finishing the classification of closed $\C^*$-embeddings into $\C^2$. Another new ingredient is that we rely on a more general version of the log BMY inequality which works for surfaces of non-negative logarithmic Kodaira dimension. We tried to make the article self-contained. In section \ref{sec:AMS} we give an independent, direct proof of Theorem B using some basic properties of $\Q$-acyclic surfaces.

The author would like to thank Peter Russell and the referee for a careful reading of the preliminary version of the article.

\tableofcontents

\section{Preliminaries and notation}\label{sec:preliminaries}

\ssk We work in the category of complex algebraic varieties. The results of this section are well known, we give short proofs for completeness. Let $D=\sum_{i=1}^n D_i$ be a reduced effective divisor on a smooth projective surface, which has smooth components $D_i$ and only normal crossings (i.e.\ $D$ is an \emph{snc-divisor}). The number of (irreducible) components of $D$ is denoted by $\#D$. A component $C$ of $D$ is \emph{branching} if it meets more then two components of $D-C$. A $(k)$-curve is a curve isomorphic to $\PP^1$ which has self-intersection $k$. We say that $D$ is \emph{snc-minimal} if after a contraction of any $(-1)$-curve contained in $D$ the image of $D$ is not an snc-divisor, or equivalently, if every $(-1)$-curve of $D$ is branching. We define the discriminant of $D$ by $d(D)=\det(-[D_i\cdot D_j]_{i,j\leq n})$. We put $d(0)=1$.

A \emph{smooth pair} $(\ov X,D)$ consists of a smooth projective surface $\ov X$ and a reduced snc-divisor $D$. If $X$ is a smooth quasi-projective surface then a \emph{smooth completion} of $X$ is any smooth pair $(\ov X,D)$ with a fixed identification $\ov X\setminus D\cong X$.  By $\rho(\ov X)$ we denote the Picard rank of $\ov X$, which in case of rational surfaces is the same as $\dim H_2(\ov X,\Q)$. We check easily that the discriminant of $D$ and its total reduced transform under a blowing up are the same. Thus, for smooth completions of $X$ the discriminant $d(D)$ depends only on $X$. In particular, if $X=\C^2$ then $d(D)=-1$. The following formula is a consequence of elementary properties of determinants.

\setcounter{theorem}{0}
\blem Let $T_1$ and $T_2$ be reduced snc-divisors for which $T_1\cdot T_2=1$ and let $C_1$, $C_2$ be the unique components of $T_1$ and $T_2$ which meet. Then \begin{equation}d(T_1+T_2)=d(T_1)d(T_2)-d(T_1-C_1)d(T_2-C_2).\label{eq:splitting_d}\end{equation}
\elem

A $\PP^1$- (a $\C^1$-) fibration is a surjective morphism whose general fibers are isomorphic to $\PP^1$ (respectively to $\C^1$). If $(\ov X,D)$ is a smooth completion of $X$ and $p$ is some fixed $\PP^1$-fibration of $\ov X$ we put $\Sigma_X=\sum_{F\not\subset D}(\sigma(F)-1)$, where the sum is taken over all fibers of $p$ not contained in $D$ and $\sigma(F)$ is the number of components of $F$ not contained in $D$. Clearly, $\Sigma_X\geq 0$ and the equality holds if and only if the restriction $p_{|X}$ has irreducible fibers. Let $\nu$ and $h$ be respectively the number of fibers contained in $D$ and the number of horizontal components of $D$ (i.e.\ these whose push-forward by $p$ does not vanish). The following lemma is due to Fujita \cite[4.16]{Fujita-noncomplete_surfaces}.

\blem\label{lem:Sigma} Let $(\ov X,D)$ be a smooth completion of a smooth surface $X$. With the above notation for every $\PP^1$-fibration of $\ov X$ we have  \begin{equation}h+\nu+\rho(\ov X)=\Sigma_X+\#D+2\label{eq:Sigma}.  \end{equation}
\elem

\begin{proof} Having a $\PP^1$-fibration, $\ov X$ dominates some $\PP^1$-bundle over a projective curve.  The latter has $\rho=2$, so we have $\rho(\ov X)-2=\sum_F(\#F-1)=\sum_{F}\#F\cap D+\sum_{F}(\sigma(F)-1)=(\#D-h)+\Sigma_X-\nu.$
\end{proof}

It is well known that every singular fiber of a $\PP^1$-fibration of a smooth projective surface can be inductively reconstructed from a $0$-curve by blowing up. In particular, we deduce by induction the following lemma.

\blem\label{lem:fibers} Let $F$ be a reduction of a singular complete fiber of a $\PP^1$-fibration of some smooth projective surface. Then $F$ is a rational snc-tree and its $(-1)$-curves are non-branching. Assume $F$ contains a unique $(-1)$-curve $L_F$. Then $F-L_F$ has at most two connected components and if it has two then one of them is a chain of rational curves. Moreover, $F$ contains exactly two components of multiplicity $1$, they are tips of $F$ and in case $F$ is not a chain they belong to the same connected component of $F-L_F$.
\elem

Note that sections meet only vertical components of multiplicity $1$. We need a description of snc-minimal boundary divisors of $\C^2$. We follow the proof by Daigle and Russell \cite[5.12]{Daigle_boundaries_of_C2}, \cite[\S1]{Russell_on_Ramanujam_thm} (which works for any surface completable by a chain). The lemma was originally proved by Ramanujam \cite{Ramanujam} using only the fact that $\C^2$ is a smooth contractible surface which is simply connected at infinity.

\blem\label{lem:D_for_C2_is_chain} If $(\ov X,D)$ is a smooth snc-minimal completion of $\C^2$ then $D$ is a chain. \elem

\begin{proof} First of all, consider a reduced divisor $B$ (on some smooth projective surface) which can be transformed into a $0$-divisor by a sequence of blowups and blowdowns by taking reduced total transforms and push forwards. Assume also that $D_0$ is either a zero divisor or a smooth curve not in $B$, such that the transformation can be done modulo $D_0$, i.e.\ that under all steps of the process the proper transform of $D_0$ is not contracted and stays smooth. We claim that $B$ contains a $(-1)$-curve in $B$ which is non-branching in $D_0+B$ (in particular, $D_0+B$ is not snc-minimal). To see this let $B_0$ be the first component of $B$ which is contracted by the transformation, i.e.\ the transformation starts with a sequence of blowups and then it contracts the proper transform $B_0'$ of $B_0$. If follows that $B_0'$ is a $(-1)$-curve which is non-branching in the total reduced transform of $D_0+B$ and hence that $B_0$ is a curve with $B_0^2\geq -1$, non-branching in $D_0+B$. But the intersection matrix of $B$, and hence of all its transforms, is negative definite, so $B_0^2=-1$ and we are done.

Suppose $D$ has a branching component $D_0$. By the factorization theorem for birational morphisms between smooth projective surfaces we know that $D$, being a boundary of $\C^2$, can be transformed into a chain of rational curves. Therefore, there is a connected component $B$ of $D-D_0$ which can be transformed to $0$ modulo $D_0$. By the above argument there is a $(-1)$-curve $B_0$ in $B$ which is non-branching in $D_0+B$. But then $D$ is not snc-minimal; a contradiction.
\end{proof}

The \emph{type} of an ordered chain of rational curves $T=T_1+\ldots+T_k$ is the sequence $[-T_1^2,\ldots,-T_k^2]$. We say that the chain is in a \emph{standard form} if it is of type $[0]$, $[1]$ or $[0,0,a_1,\ldots,a_{k-2}]$ for some $a_i\geq 2$. It is an elementary exercise to show that by blowing up and down on $T$ we can bring it into a standard form (the number of zeros is at most two by the Hodge index theorem). The formula \ref{eq:splitting_d} implies that if $T$ is a boundary of $\C^2$ in a standard form then it is of type $[0,0]$.

An snc-divisor is of \emph{quotient type} if it can be contracted algebraically to a quotient singularity, i.e.\ to a smooth or an isolated singular point which is locally analytically of type $\C^2/G$, where $G$ is a finite subgroup of $GL(2,\C)$. As a consequence, the intersection matrix of such a divisor is negative definite. Snc-minimal divisors of quotient type are well known, they are either negative definite chains of rational curves (corresponding to cyclic singularities $\C^2/\Z_k$) or special rational trees with unique branching components (forks). It is known that they do not contain $(-1)$-curves (see \cite{Brieskorn}). For a general snc-divisor $D$ we denote the set of its connected components of quotient type by $\qt(D)$.

Let $(\ov X,D)$ be a smooth pair. For it we can run a minimal model program to obtain a birational morphism onto a log terminal surface $(V,\Delta)$ such that there is no curve $L$ on $V$ for which $L^2<0$ and $L\cdot(K_{V}+\Delta)<0$. The pair $(V,\Delta)$ is called a \emph{minimal model} of $(\ov X,D)$. If $(\ov X,D)$ is snc-minimal (i.e.\ $D$ is an snc-minimal divisor) and the resulting morphism contracts only curves with supports in $D$ and its push-forwards then we say that $(\ov X,D)$ is \emph{almost minimal} (for another, more direct definition see \cite[\S 2.3.11]{Miyan-OpenSurf}). Recall that a \emph{log resolution} of a pair $(V,\Delta)$ with reduced $\Delta$ is a proper birational morphism from a smooth pair $r\:(\ov X,D)\to (V,\Delta)$ such that $D$ is the total reduced transform of $\Delta$.

Let $c(D)$ denote the number of connected components of $D$. The following proposition follows from \cite[6.20]{Fujita-noncomplete_surfaces}. We denote the logarithmic Kodaira-Iitaka dimension by $\kappa$.

\bprop\label{lem:log_exc_curves} Let $(X,D)$ be a smooth snc-minimal pair which is not almost minimal. Then there exists a $(-1)$-curve $\ell$ on $X$ which meets at most two connected components of $D$, each at most once and transversally and for which $\kappa(X\setminus D)=\kappa(X\setminus(D\cup\ell))$. In particular, if $p\:(X,D)\to (X',D')$, with $D'=p_*D$, is the contraction of $\ell$ then $$\chi(X'\setminus D')+c(D')=\chi(X\setminus D)+c(D)-1.$$ \eprop

Note that $(X',D')$ is a smooth pair and that if $(X',D')$ is not snc-minimal then the sum $\chi(X'\setminus D')+c(D')$ does not change when we snc-minimalize $D'$.

\section{Proof of Theorem 1}\label{sec:LZ}

Assume that $A\subseteq \C^2$ is a topologically contractible curve. Let $k\geq 0$ be the number of singular points of $A$. We write $\C^2$ as $\PP^2\setminus L_\8$, where $L_\8$ is the line at infinity. Let $\bar A\subseteq \PP^2$ be the closure $A$ and let $$\pi\:(\ov X',D')\to (\PP^2,L_\8+\bar A)$$ be the minimal log resolution of singularities. We denote the proper transforms of $\bar A$ and $L_\8$ on $\ov X'$ by $E$ and $L_\8'$ respectively. Since the germ of $\bar A$ at infinity is analytically irreducible, the reduced total transform of $L_\8$ contains a unique component $C'$ meeting $E$. Moreover, their difference can be written as $(\pi^*L_\8)_{red}-C'=D_1'+D_2$, where $D_1'$ and $D_2$ are connected and $D_1'$ contains $L_\8'$. We may, and shall, assume that $\bar A$ does not meet $L_\8$ transversally, otherwise $\bar A$ is a line in which case the above theorems obviously hold. By the minimality of the resolution it follows that $D_2$ is a rational chain with negative definite intersection matrix and with no $(-1)$-curves. In particular, $d(D_2)\geq 2$. Let $U$ be the reduced exceptional divisor over the singular points of $A$. Put $D_3=E+U$. We have $D'=D_1'+C'+D_2+D_3$.

\begin{figure}[h]\centering\includegraphics[scale=0.6]{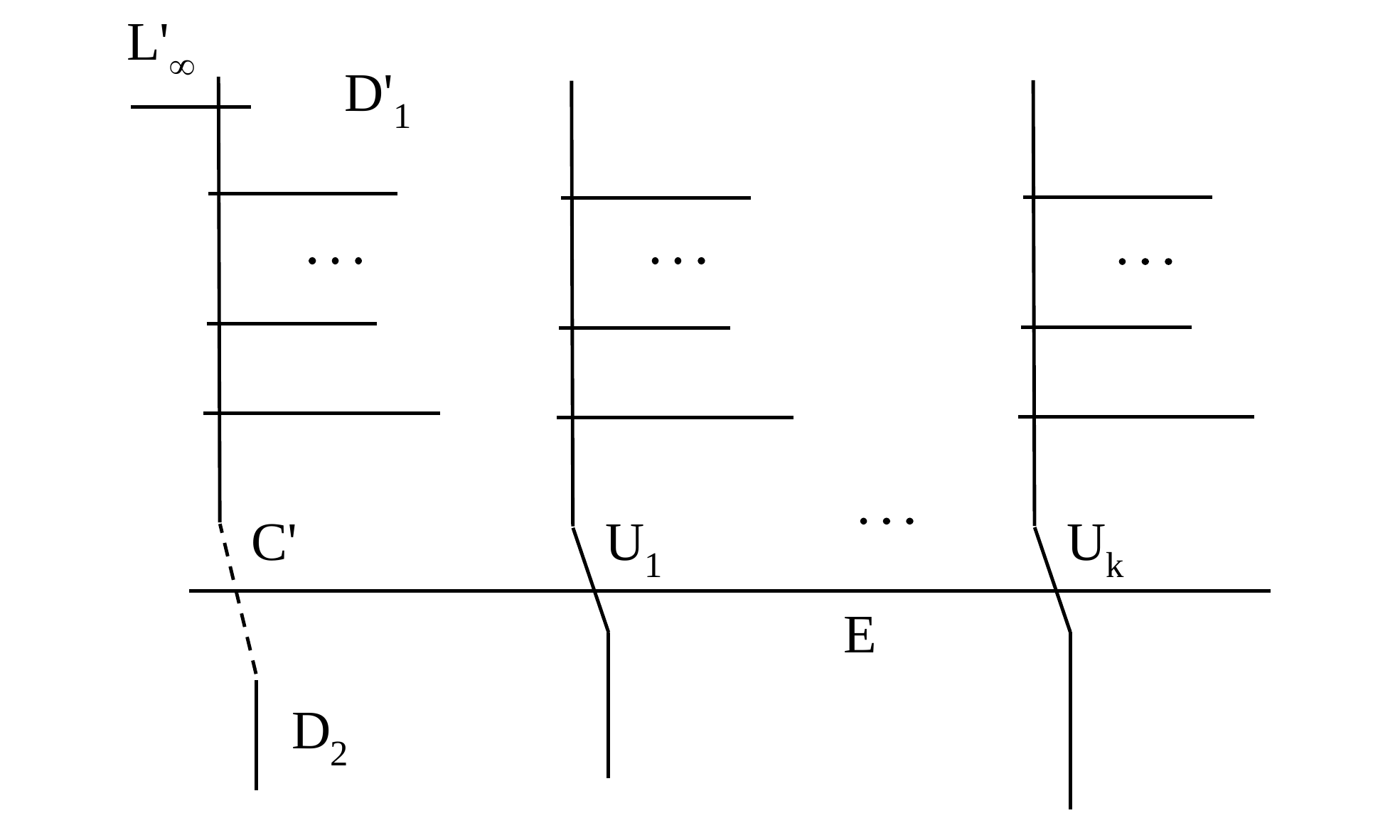}\caption{The divisor $D'$ on $\ov X'$. Lines denote chains of rational curves.}  \label{fig:Xbarprim}\end{figure}

It may happen that $L_\8'$ is a $(-1)$-curve (necessarily non-branching in $D_1'$). Moreover, its contraction may introduce new non-branching $(-1)$-curves in the boundary. Let $$\psi\:(\ov X',D')\to (\ov X,D)$$ be the composition of successive contractions of non-branching  $(-1)$-curves contained in $D_1'$ and its images. Put $D_1=\psi_*D_1$, and $C=\psi_*C'$. Since the curves contracted by $\psi$ are disjoint from $D_2+D_3$, we denote $D_2$, $E$, $U$, $D_3$ and their images on $\ov X$ by the same letters. We have $D=D_1+C+D_2+D_3$. Put $$B=D_1+D_2+D_3=D-C\text{\ \ and\ \ } X=\ov X\setminus B.$$ Clearly, $(\ov X,D_1+D_2+D_3)$ is a smooth completion of $X$ and the $D_i$'s are the connected components of the boundary (it may happen that $D_1=0$). Also, $X\setminus C=\C^2\setminus A$, with the smooth completion $(\ov X,D)$. It follows that $\chi(X)=\chi(\C^2\setminus A)+\chi(C\setminus (D_1\cup D_2\cup E))=2-\#C\cap(D_1\cup D_2\cup E)=-\#C\cap D_1.$ Thus, $\chi(X)=-1$, unless $D_1=0$.

Because the log resolution $\pi\:(\ov X',D')\to (\PP^2,L_\8+\bar A)$ is minimal, each connected component of $U$ contains a unique component $U_i$, $i=0,\ldots,k$, meeting $E$. Moreover, each $U_i$ is a $(-1)$-curve and $U_i\cdot E=1$. The divisor $B$ is snc-minimal except the case when $E^2=-1$ and $U$ has at most two connected components. Note however, that the minimality of the resolution implies that the only components of $B$ which meet $E$ are the $(-1)$-curves of $U$. So, even if $E$ is a non-branching $(-1)$-curve in $B$, its contraction does not introduce new non-branching $(-1)$-curves.

\begin{proof}[Proof of Theorem \ref{thm:main}]

\bcl If $D_1=0$ then Theorem \ref{thm:main} holds. \ecl

\begin{proof} We have $C^2\geq (C')^2+1\geq 0$. Let $D_C$ be the component of $D_2$ meeting $C$. Now \eqref{eq:splitting_d} gives $-C^2d(D_2)-d(D_2-D_C)=-1$, so $C^2d(D_2)+d(D_2-D_C)=1$. Because $d(D_2)\geq 2$, we obtain $C^2=0$ and $d(D_2-D_C)=1$, hence $D_2$ is irreducible. If we blow up once on $E\cap C$ and contract the proper transform of $C$ the new boundary of $X$ has the same dual graph but the self-intersection of $D_2$ increases. Repeating this elementary transformation we may assume $D_2^2=0$. Then the contraction of $U$ maps $\ov X$ to a smooth surface $\F$ with $\rho=2$ and the linear system of (the proper transform of) $D_2$ induces a projection $p\:\F\to \PP^1$. Moreover, (the proper transform of) $E$ is disjoint from $D_2$, so it is a $0$-curve. Then $U=0$ and Theorem \ref{thm:main} holds.
\end{proof}

Thus from now on we assume $D_1\neq 0$ (and $D_2\neq 0$).

\bcl There is no curve $\ell \not\subseteq D_1+D_2+U$ for which the intersection matrix of $\ell+D_1+D_2+U$ is negative definite. \ecl
\begin{proof} We have $\#(\ell+D_1+D_2+U)=\#D-1=\rho(\ov X)$, so the claim follows from the Hodge index theorem. \end{proof}

\bcl If $D_1$ is not negative definite then it is not a chain and $C$ is a $(-1)$-curve. \ecl

\begin{proof} Suppose $D_1$ is a chain. We change it into a standard form $\ti D_1$, so that the zero-curve is a tip of $D$. Denote the proper transform of $C$ by $\ti C$. If $\ti D_1$ is irreducible then it is a $0$-curve, so by \eqref{eq:splitting_d} $-1=d(\ti D_1+\ti C+D_2)=-d(D_2)\leq -2$, which is impossible. Thus $\ti D_1$ is not irreducible. Then $E+\ti C+D_2$ is vertical for the $\PP^1$-fibration induced by the $0$-tip, so either $\ti C$ is a branching component of a fiber or it meets two components of a fiber and the section contained in $\ti D_1$. By Lemma \ref{lem:fibers} $\ti C$ cannot be a $(-1)$-curve, hence $(\ti C)^2\leq -2$. Then $\ti D_1+\ti C+D_2$ is a boundary of $\C^2$ in a standard form, so it is of type $[0,0]$. But then $D_2=0$; a contradiction. Thus $D_1$ is not a chain. By Lemma \ref{lem:D_for_C2_is_chain} $D_1+C+D_2$ is not snc-minimal, so $C^2=-1$.
\end{proof}

\bcl $D_3$ is not a $(-1)$-curve. \ecl

\begin{proof} Suppose $D_3$ is a $(-1)$-curve. Then $U=0$ and $D_3=E$. Taking $\ell=E$ in Claim 2 we see that $D_1$ is not negative definite. By Claim 3 $D_1$ is not a chain and $C$ is a $(-1)$-curve. Consider the $\PP^1$-fibration given by the linear system of $E+C$. We have $h=2$, so $\Sigma_X=\nu$. The divisor $B$ contains no fibers. Indeed, otherwise $D$ would contain more than one fiber ($E+C$ is one of them), hence $D$ would contain a loop, which is false. We obtain $\Sigma_X=\nu=0$. Let $F$ be a singular fiber and $L_F$ its unique component not contained in $B$. The two sections contained in $B$ belong to different connected components of $B$, so the two components of $F$ of multiplicity one meeting them belong to different connected components of $F-L_F$. By Lemma \ref{lem:fibers} it follows that $F$ is a chain and meets the sections in tips. Since the vertical part of $D_2$ is connected, there is at most one singular fiber other than $E+C$. It follows that $D_1$ is a chain; a contradiction.
\end{proof}

\bcl If $U\neq 0$ then $D_3$ is not contained in a divisor of quotient type. \ecl
\begin{proof} Suppose $Q$ is a divisor of quotient type containing $D_3$. The $(-1)$-curve $U_1$ is branching in $D_3$, hence in $Q$. Because the self-intersection of $U_1$ is $(-1)$, the snc-minimalization of $Q$ does not touch $U_1$, hence leads to an snc-minimal divisor of quotient type which contains a branching $(-1)$-curve. But there are no such divisors. A contradiction.
\end{proof}

\ssk We now analyze the creation of the almost minimal model of $(\ov X,B)$. Let $\epsilon$ be the contraction of $E$ in case it is a non-branching $(-1)$-curve, otherwise put $\epsilon=\id_{\ov X}$. Let $$(\ov X,B)\xra{\epsilon}(\ov X_0,B_0)\xra{p_1}(\ov X_1,B_1)\xra{p_2}\ldots\xra{p_n}(\ov X_n,B_n)$$ be a sequence of birational morphism leading to the almost minimal model $(\ov X_n,B_n)$ of $(\ov X,B)$ grouped so that $p_{i+1}$ is a composition of a contraction of some $(-1)$-curve $\ell_i\not\subseteq B_i$ witnessing the non-almost minimalisty of $(\ov X_i,B_i)$ followed by the snc-minimalization of the image of $B_i$. Put $p=p_n\circ\ldots\circ p_1\circ \varepsilon$ and $X_i=\ov X_i\setminus B_i$. We denote $\ell_i$'s and their proper transforms on $\ov X$ by the same letters.

\bcl $\chi(X)=\chi(X_0)$ and $\chi(X_{i})\geq \chi(X_{i+1})$. \ecl
\begin{proof} Since $D_3$ is not a $(-1)$-curve, we have $\chi(X_0)=\chi(X)$. Suppose $\chi(X_{i+1})>\chi(X_i)$. Since $B_i$ is snc-minimal, $\ell_i$ meets two connected components of $B_i$, each transversally in a unique point, and together with these components contracts to a smooth point on $X_{i+1}$. Suppose $U\neq 0$. By Claim 5 these two connected components do not contain the image of $D_3$, so they contain the images of $D_1$ and $D_2$. Thus $D_1+\ell+D_2$ is contained in a divisor of quotient type disjoint from $D_3$, which contradicts Claim 1. Thus $U=0$ and Claim 1 implies that the connected components met by $\ell$ do not contain the image of $D_1$, hence contain images of $D_2$ and $E$. Let $\ov X\to \ti X$ be the contraction of $D_2$ and $E$.  We have $-1=d(D_1)\cdot d(C+D_2)-d(D_2)$, so since $d(D_2)\geq 2$, we see that $d(D_1)$ and $d(D_2)$ are coprime. In particular, $d(D_1)\neq 0$. Then the components of $D_1$ generate $H_2(\ti X,\Q)$. Using Nakai's criterion we show easily that $D$ supports an ample divisor (see (\cite[2.4]{Fujita-noncomplete_surfaces}), so $\ti X\setminus D_1$ is affine. But it contains the image of $\ell$, which is projective; a contradiction.
\end{proof}

\bcl $\kappa(X)=-\8$. \ecl

\begin{proof} Suppose $\kappa(X)\geq 0$. Then $\kappa(X_n)=\kappa(X)\geq 0$, so since $(\ov X_n,B_n)$ is almost minimal, the log BMY inequality (see \cite[3.4, \S 9]{Langer} and \cite[2.5]{Palka-exceptional}) gives $$0\leq \frac{1}{3}((K_{\ov X_n}+B_n)^+)^2\leq \chi(X_n)+\sum_{T\in \qt(B_n)} \frac{1}{|\Gamma(T)|}.$$ The divisor $B_n$ is snc-minimal, so each $\Gamma(T)$ is nontrivial, hence $0\leq \chi(X_n)+\frac{1}{2}\#\qt(B_n)\leq \chi(X)+\frac{1}{2}\#\qt(B_n)$. If $B_n$ has more than two connected components of quotient type then $D_1$, $D_2$ and $D_3$ are contained in disjoint divisors of quotient type, which is impossible by Claim $1$ (take $\ell=E$). It follows that all the above inequalities become equalities, so $B_n$ has exactly three connected components, two of them are of quotient type with $|G_i|=2$ and $\chi(X_n)=\chi(X)=-1$. It follows that two connected components of $B_n$ are $(-2)$-curves and $\chi(X_i)=\chi(X_{i+1})$ for every $i$. By Proposition \ref{lem:log_exc_curves} $n=0$, i.e.\ $(\ov X_0,B_0)$ is almost minimal.

If $U\neq 0$ then by Claim 5, $D_1$ and $D_2$ are of type $[2]$. But as we have seen in the proof of Claim 6, $d(D_1)$ and $d(D_2)$ are coprime. Thus $U=0$. By Claim 1 $E+D_1$ is not negative definite, so the only possibility is that $D_2$ and $E$ are $(-2)$-curves and $D_1$ is not negative definite. By Claim 3 $C^2=-1$. Consider the $\PP^1$-fibration of $\ov X$ induced by the linear system of $D_2+2C+E$. We have $h=1$ hence $0\leq\Sigma_X=\nu-1$. Since $D$ contains no loop, the $2$-section contained in $D_1$ meets $F_\8$ in one point. Since $D$ is snc-minimal we infer that $F_\8$ is of type $[2,1,2]$. Denote the middle $(-1)$-curve by $L$. When we snc-minimalize $D_1+C+D_2$ starting from the contraction of $C$ and $D_2$ we do not touch $F_\8-L$. By Lemma \ref{lem:D_for_C2_is_chain} the result of this minimalization is of type $[2,a,2]$ for some $a\leq 0$. However, the discriminant of $[2,a,2]$ is even, hence a chain of this type cannot be a boundary of $\C^2$; a contradiction.
\end{proof}

\bcl $X$ has a $\C^1$-fibration. \ecl

\begin{proof} Suppose $X$ has a $\PP^1$-fibration. It extends to a $\PP^1$-fibration of $\ov X$. Then $D_3$ is vertical, so it cannot contain a branching $(-1)$-curve. It follows that $U=0$. We have now $\Sigma_X=\nu-2$, so there are at least $2$ fibers contained in $B$. It follows that $D_1$ is a fiber, hence $d(D_1)=0$ and $d(D_1)$ and $d(D_2)$ are not coprime; a contradiction. Thus $X$ has no $\PP^1$-fibration. Suppose it also has no $\C^1$-fibration. Because $\kappa(X)=-\8$, the structure theorems for smooth surfaces of negative logarithmic Kodaira dimension imply that $(\ov X_n,B_n)$ is a minimal log-resolution of a log del Pezzo surface (\cite[2.3.15]{Miyan-OpenSurf}). Moreover, since not all connected components of $B$ are of quotient type, this log del Pezzo is open, hence has a structure of a Platonic $\C^*$-fibration by \cite{MiTs-PlatFibr}. In particular, $\chi(X_n)=0$. Then $\chi(X_n)>\chi (X)=-1$, which contradicts Claim~6.
\end{proof}

\bcl $X$ has a $\C^1$-fibration onto $\C^1$ with irreducible fibers. \ecl

\begin{proof}
Let $\ti \pi\:(\ti X,\ti B)\to (\ov X_0,B_0)$ be a minimal modification of $(\ov X_0,B_0)$ such that the above $\C^1$-fibration can be written as $r_{|X}$, where $r\:\ti X\to \PP^1$ is a $\PP^1$-fibration. Because the base point of $r\:\ov X_0\map \PP^1$ (if exists) belongs to $B_0$, we have $\rho(\ti X)-\#\ti B=\rho(\ov X_0)-\#B_0=0$, hence $h+\nu=2+\Sigma_X$ by \eqref{eq:Sigma}. But because $r_{|X}$ is a $\C^1$-fibration, $h=1$, so $\nu\geq 1$, i.e.\ $\ti B$ contains a fiber $F_\8$ of $r$. Suppose there is more than one fiber contained in $\ti B$. Since the reduced total transform of $D$, $\ti D=\ti B+C$, contains no loop, $C$ is vertical. In particular, $C^2\leq 0$. But $C$ is a branching component of $D$, so since $h=1$, it cannot be a fiber. Thus $C$ is a $(-1)$-curve, and hence it is a non-branching component of a fiber containing it. But $C$ is branching in $\ti D$, so it meets exactly two other vertical components of $\ti D$ and the section contained in $\ti D$. However, the former implies that its multiplicity in the fiber is at least two and the latter implies that its multiplicity is one; a contradiction. Thus $\nu=1$ and hence $\Sigma_X=0$, so $r(X)\cong \C^1$ and $r_{|X}$ has irreducible fibers.
\end{proof}

\bcl The $\C^1$-fibration of $X$ has no base points on $\ov X_0$.\ecl

\begin{proof} Denote the unique fiber and the unique section of $r$ contained in $\ti B$ respectively by $F_\8$ and $H$. The divisor $B_0$ is snc-minimal and, since $D_3\neq [1]$ and $D_1\neq 0$, it has three connected components. Let $T_1$, $T_2$, $T_3$ be the connected components of $\ti B$, say $T_3$ contains $H$. Then $T_3$ contains $F_\8$ and the divisors $T_1$ and $T_2$ are vertical and snc-minimal. After snc-minimalizing $T_3-H$ if necessary we may assume $\ti B-H$ contains only branching $(-1)$-curves. But then arguing as in the proof of the previous claim we see that in fact $T_3-H$ contains no $(-1)$-curves at all. Let $F$ be a singular fiber other than $F_\8$. Since $\Sigma_X=0$, we infer that $F$ contains a unique $(-1)$-curve $L_F$. By Lemma \ref{lem:fibers} $F-L_F$ has at most two connected components and one of them meets $H$. Since $T_1$, $T_2$ and $T_3$ are disjoint, there are at least two singular fibers other than $F_\8$. It follows that $H$ is a branching component of $\ti B$. Bu then $\ti \pi=\id$, i.e. the $\C^1$-fibration of $X$ is a restriction of a $\PP^1$-fibration of $\ov X_0$.
\end{proof}

Since $B_0$ is snc-minimal, $B_0-H$ contains no $(-1)$-curves, so $F_\8$ is a $0$-curve. It remains to prove that $F_\8=E$. Since $C$ is a is a branching component of $\epsilon_*D$ with $C^2\geq -1$, it follows that it is horizontal. Then $F_\8$ meets $C$. In particular, $F_\8$ is a tip of $B_0$ and $C$ is a section. If $F_\8\subseteq D_1$ then the snc-minimalization of $D_1+C+D_2$ does not contract $F_\8$, hence by Claim 3 leads to an snc-minimal boundary of $\C^2$ which is not a chain. But the latter is impossible by Lemma \ref{lem:D_for_C2_is_chain}. Since $D_2$ is negative definite, we get $F_\8\subseteq \epsilon_*D_3$. Since $U_i$ is branching in $D_3$, $\epsilon_*U_i$ is branching in $\epsilon_*U_i$, which implies that $F_\8$ is not one of the $\epsilon_*U_i$'s. Therefore, $\epsilon=\id$ and hence $F_\8=E$.

\end{proof}

We now show how Theorem A follows from Theorem \ref{thm:main}.

\begin{proof}[Proof of Theorem A] Let $(\ov X,D)$ and $r\:\ov X\to D$ be respectively a minimal smooth completion of $\C^2\setminus A$ and a $\PP^1$-fibration as in Theorem \ref{thm:main}. Denote the proper transform of $A$ on $\ov X$ by $E$. Since $(\ov X,D-E)$ is a smooth completion of $\C^2\setminus \Sing A$, $D-E$ has a unique connected component $D_\8$ which is a rational tree with non-negative definite intersection matrix, and such that $U=D-E-D_\8$ consists of $\#\Sing A$ connected components contractible to smooth points (of $\C^2$). In particular, connected components of $U$ are negative definite rational trees. By the minimality of $(\ov X,D)$ and by the analytical irreducibility of the singularities of $A$, each such tree contains a unique $(-1)$-curve $U_i$ and $E$ meets $U$ exactly in $U_i$'s, each once and transversally. Since the analytic branch of $A$ at infinity (considered, say, in $\PP^2$) is irreducible, there is a unique component $C$ of $D_\8$ meeting $E$. We obtain that $D$ is a rational tree with $\rho(\ov X)+1$ irreducible components. Note $D$ has $h=\#\Sing A+1$ horizontal components. We have $\Sigma_{\ov X\setminus D}=0$, so by \eqref{eq:Sigma} $h=3-\nu\leq 2$, so $A$ has at most one singular point. Clearly, $C$ is a horizontal component of $D$ and $D_\8-C$ has at most two connected components, call them $D_1$ and $D_2$. If $A$ is singular ($U\neq 0$) then $U_1$, the $(-1)$-curve of $U$ meeting $E$, is the second horizontal component of $D$. They are both sections of $r$. Because $(\ov X,D)$ is minimal, $D-C-U_1$ contains no $(-1)$-curves. Indeed, such a curve would be a non-branching component of a fiber and, since the horizontal components of $D$ are sections, also a non-branching $(-1)$-curve in $D$.

Up to this point we just reproved for $(\ov X,D)$ what could be obtained by taking the special minimal completion of $\C^2\setminus A$ as defined in section $1$. Suppose $U=0$. Then $h=1$, so $\nu=2$, i.e.\ there is a unique fiber of $r$ contained in $D_1+D_2$. Since $D-C$ contains no $(-1)$-curves, the fiber is a $0$-curve. It follows that, say, $D_1$ is a $0$-curve. Making an elementary transformation on $D_1$ we may assume $C^2=-1$. The snc-minimalization of $D_\8$ does not contract $D_1$, which by Lemma \ref{lem:D_for_C2_is_chain} implies that $D_2$ is a chain (negative definite or empty). Since $D_\8$ is a boundary of $\C^2$, \eqref{eq:splitting_d} gives $-1=d(D_1)\cdot d(C+D_2)-d(D_2)=-d(D_2)$, so $d(D_2)=1$. Thus $D_2=0$ and hence $\ov X$ is a Hirzebruch surface. The contraction of $C$ maps it to $\PP^2$ and $C+D_1$ into a pair of lines, so we are done.

We may therefore assume that $U\neq 0$. Then $A$ has a unique singular point. Let $F$ be a singular fiber of $r$. Its unique component $L_F$ not contained in $D$ is also the unique $(-1)$-curve in $F$. Now $C$ and $U_1$ are sections of $r$, so they meet components of $F$ of multiplicity one. By Lemma \ref{lem:fibers} $F-L_F$ has at most two connected components. It follows that $F$ is a chain. Indeed, otherwise only one connected component of $F-L_F$ contains components of multiplicity $1$, which would imply that $D$ contains a loop. Thus, every singular fiber of $r$ is a chain with a unique $(-1)$-curve. Also, $F-L_F$ has exactly two connected components, both contained in $D$. Each such chain contains exactly two components of multiplicity one, which are tips of the chain. Since $U$ can be contracted to a point by iterating contractions of $(-1)$-curves, it follows that $U_1$ meets exactly two components of $U-U_1$, and hence $U_1\cdot (D-U_1)=3$. Thus $r$ has exactly two singular fibers, $F_1$ and $F_2$. Let $L_i$ be the unique $(-1)$-curve of $F_i$. We have $D-E=D_\8+U$ and we can write $U-U_1=V_1+V_2$ and $D_\8-C=D_1+D_2$, so that $D_i$ and $V_i$ are connected and $F_i=V_i+L_i+D_i$. Put $Y=\ov X\setminus D_\8$. The morphism $\pi_{|Y}\:(Y,U+E\setminus \{\8\})\to (\C^2,A)$ is a log resolution of singularities. The curves $\pi(L_1\cap Y)$ and $\pi(L_2\cap Y)$ are isomorphic to $\C^1$ and meet in one point, transversally.

We claim there exist coordinates $\{x_1,x_2\}$ on $\C^2$ such that $\pi(L_i\cap Y)$ is given by $x_i=0$. To see this first contract $U$. The images of $L_1$ and $L_2$ are smooth, meet transversally and have non-negative self-intersections. Now blow up on $(L_1+L_2)\cap D_\8$, so that the proper transforms of $L_1$ and $L_2$ are again $(-1)$-curves and denote the resulting projective surface by $\ti X$, the total reduced transform of $D_\8$ by $\ti D_\8$ and the proper transforms of $L_i$ by $\ti L_i$. By construction, $\ti D_\8$ is a chain met by $\ti L_i$ in a tip $W_i$ which is a $(-1)$-curve. Also, $(\ti X,\ti D_\8)$ is a smooth completion of $\C^2$. For the $\PP^1$-fibration $\ti X\to \PP^1$ induced by the linear system of $\ti L_1+\ti L_2$ we have $\Sigma_{\C^2}\geq 1$ and $h=2$, so by \eqref{eq:Sigma} $\nu=\Sigma_{\C^2}\geq 1$, i.e.\ $\ti D_\8$ contains a fiber $F$. Because $D_\8-W_1-W_2$ is connected and does not meet $\ti L_1+\ti L_2$, it follows that $F=D_\8-W_1-W_2$. Let $W_3$ and $W_4$ be the (different) components of $F$ meeting $W_1$ and $W_2$ respectively. Contracting successively $(-1)$-curves in $F-W_3-W_4$ if necessary we may assume there are no $(-1)$-curves in $F-W_3-W_4$. Because $W_3$ and $W_4$ meet sections, they have multiplicity $1$, so then $F$ is necessarily of type $[1,2,\ldots,2,1]$, where the subchain of $(-2)$-curves has length $s\geq 0$. Since $W_1+F+W_2$ is a boundary of $\C^2$, its discriminant is $-1$, which gives $s=0$. Blowing up once on $W_1+W_2$ we may assume $F$ is of type $[1,2,1]$. Then the contraction of $W_1+W_2+W_3+W_4$ maps the completion of $\C^2$ onto $\PP^2$ and $L_1+L_2$ onto a pair of lines meeting transversally. This gives the coordinates $\{x_1,x_2\}$.

Put $n=d(V_1)$ and $m=d(V_2)$. The morphism $r_{|Y}\:Y\to \PP^1$ is a $\C^1$-fibration and $x_1^n/x_2^m$ is a coordinate on $\C^1$. Since $E\setminus \{\8\}$ is a fiber of $r_{|Y}$, it has equation $x_1^n/x_2^m=\alpha$ for some $\alpha\in \C^*$ and we may assume $\alpha=1$. Then $A$ has equation $x_1^n=x_2^m$.
\end{proof}

\section{Another proof of the Abhyankar-Moh-Suzuki theorem}\label{sec:AMS}

We now give another, independent of section \ref{sec:LZ}, proof of Theorem B. We need the following lemma. In case of contractible surfaces it was obtained by similar methods by Gurjar and Miyanishi \cite{GM_preprint}.

\begin{lemma}\cite[3.1(vii)]{Palka-classification1}\label{lem:Q-acyclic}  Let $X\to X'$ be a log resolution of a rational $\Q$-acyclic normal surface, let $\widehat E$ be the reduced exceptional divisor and $(\ov X,D)$ a smooth completion of $X$. If $\widehat E+D$ is a sum of rational trees then $$|d(D)|=d(\widehat E)\cdot |H_1(X',\Z)|^2.$$ \end{lemma}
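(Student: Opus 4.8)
The plan is to compare the unimodular intersection lattice $L=H_2(\ov X,\Z)$ of the rational projective surface $\ov X$ with the sublattice $M$ generated by the classes of the components of $\Gamma:=\widehat E+D$, and then to identify the index $[L:M]$ topologically. Write $X'_0:=X'\setminus\Sing X'$; since $\widehat E$ lies over $\Sing X'$ and $D$ is the boundary, we have $X'_0=\ov X\setminus\Gamma=X\setminus\widehat E$, where $X$ is the smooth resolution. First I would fix the linear algebra: as $X'$ is rational and $\Q$-acyclic, $\ov X$ is rational, so $L$ is free and its intersection form is unimodular, and moreover $H_2(X,\Q)$ is spanned by the classes of the components of $\widehat E$ (whose intersection matrix is negative definite) while $H_1(X,\Q)=0$. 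The Borel--Moore/Poincar\'e-duality exact sequence for the closed subset $\Gamma\subset\ov X$,
\[ H_2(\Gamma)\xrightarrow{\ i_*\ }H_2(\ov X)\to H^{\mathrm{BM}}_2(X'_0)\to H_1(\Gamma)=0, \]
uses that $\Gamma$ is a sum of rational trees to kill the last term; since $H_2(X'_0,\Q)=0$ (the links of the singular points are rational homology spheres), $H^{\mathrm{BM}}_2(X'_0)\cong L/M$ is finite, so $M$ has full rank $\rho(\ov X)$. An Euler characteristic count gives $\#\Gamma=\rho(\ov X)$ (and forces $D$ connected), so the classes $[\Gamma_i]$ form a basis of $M$, with Gram matrix $[\Gamma_i\cdot\Gamma_j]$. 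As $\widehat E$ and $D$ are disjoint this matrix is block diagonal, whence $|\det[\Gamma_i\cdot\Gamma_j]|=|d(D)|\,d(\widehat E)$, while the sublattice formula in a unimodular lattice gives $|\det[\Gamma_i\cdot\Gamma_j]|=[L:M]^2$. Thus
\[ |d(D)|\,d(\widehat E)=[L:M]^2. \]

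It then remains to compute the index. Poincar\'e duality for the open $4$-manifold $X'_0$ together with the universal coefficient theorem yields $L/M\cong H^{\mathrm{BM}}_2(X'_0)\cong H^2(X'_0,\Z)\cong H_1(X'_0,\Z)$, the free part of $H^2$ vanishing because $H_2(X'_0,\Q)=0$. So $[L:M]=|H_1(X'_0,\Z)|$, and the target identity reduces to
\[ |H_1(X'_0,\Z)|=d(\widehat E)\cdot|H_1(X',\Z)|. \]
For this I would use the inclusion $X'_0=X\setminus\widehat E\subset X$ with $X$ smooth. By excision and Lefschetz duality $H_2(X,X\setminus\widehat E)\cong H^2(\widehat E)\cong\Z^{\#\widehat E}$, and the map $j\colon H_2(X)\to H^2(\widehat E)$ is intersection with the components of $\widehat E$. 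The long exact sequence of the pair gives a short exact sequence $0\to\operatorname{coker}(j)\to H_1(X\setminus\widehat E)\to H_1(X)\to 0$, and $H_1(X)\cong H_1(X',\Z)$ because $X\to X'$ contracts simply connected trees. So I only need $|\operatorname{coker}(j)|=d(\widehat E)$.

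The key point is that $\operatorname{coker}(j)$ has order exactly $d(\widehat E)$, not merely dividing it. Here $\Q$-acyclicity is used decisively: it forces the classes $[\widehat E_l]$ to span $H_2(X,\Q)$, so every class in $H_2(X,\Z)$ is, up to torsion, an integral combination of the $[\widehat E_l]$; since the target $H^2(\widehat E)$ is torsion-free, all torsion dies and the image of $j$ is exactly the column lattice $[\widehat E_l\cdot\widehat E_m]\,\Z^{\#\widehat E}$, whose cokernel has order $d(\widehat E)$ (this cokernel is $\bigoplus_i H_1(L_i)$ over the links $L_i$). Hence $|H_1(X'_0,\Z)|=d(\widehat E)\,|H_1(X',\Z)|$, so $[L:M]=d(\widehat E)\,|H_1(X',\Z)|$; substituting into the boxed identity gives $|d(D)|\,d(\widehat E)=d(\widehat E)^2|H_1(X',\Z)|^2$, i.e.\ $|d(D)|=d(\widehat E)\,|H_1(X',\Z)|^2$.

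The main obstacle is precisely this last integral bookkeeping: one must control the image of $j$ on the nose rather than only up to finite index, and the rational vanishing supplied by $\Q$-acyclicity, combined with the freeness of $H^2(\widehat E)$, is what pins it down. A secondary point to handle carefully is the finiteness of every group entering the order computations (so that multiplicativity of orders in the exact sequences is legitimate): this follows from $\Q$-acyclicity through $H_2(X'_0,\Q)=0$ and $H_1(X,\Q)=0$, and from the negative-definiteness of $\widehat E$, which also guarantees the discriminant $d(\widehat E)$ is a genuine (positive) order.
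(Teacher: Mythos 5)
Your lattice-theoretic architecture is fine and genuinely different in packaging from the paper's, but it has one genuine gap, and it sits exactly at the step you yourself flag as decisive: the claim that the image of $j\colon H_2(X,\Z)\to H^2(\widehat E)\cong\Z^{\#\widehat E}$ is \emph{exactly} the column lattice of $[\widehat E_l\cdot\widehat E_m]$. Your justification --- ``$\Q$-acyclicity forces the $[\widehat E_l]$ to span $H_2(X,\Q)$, so every class in $H_2(X,\Z)$ is, up to torsion, an integral combination of the $[\widehat E_l]$'' --- is invalid as linear algebra: rational spanning only makes the sublattice $\langle[\widehat E_l]\rangle$ of \emph{finite index} in $H_2(X,\Z)/\mathrm{Tors}$ (compare $2\Z\subset\Z$, whose cokernel is torsion although $1$ is not torsion plus an element of $2\Z$). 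Concretely, since collapsing $\widehat E$ identifies $X/\widehat E$ with $X'$, the pair sequence of $(X,\widehat E)$ gives $H_2(\widehat E,\Z)\to H_2(X,\Z)\to H_2(X',\Z)\to H_1(\widehat E,\Z)=0$, so the cokernel of $\langle[\widehat E_l]\rangle\hookrightarrow H_2(X,\Z)$ is precisely $H_2(X',\Z)$: a torsion group by $\Q$-acyclicity, but not zero by any formal argument. A lift of a nontrivial class of $H_2(X',\Z)$ can have intersection vector with the $\widehat E_l$ outside the column lattice, so what your bookkeeping actually yields is $|\operatorname{coker}(j)|=d(\widehat E)/t$ with $t=|\im(H_2(X',\Z)\to\Z^{\#\widehat E}/\mathrm{col})|$, hence only $|d(D)|\cdot t^2=d(\widehat E)\cdot|H_1(X',\Z)|^2$ --- a divisibility statement, not the claimed equality.

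The missing input is genuinely geometric: $H_2(X',\Z)=0$. The paper's proof meets the very same obstruction --- its two Lefschetz-duality exact sequences for $K=\ov X\setminus(Tub(D)\cup Tub(\widehat E))$, with boundary $3$-manifolds $M_D$, $M$ whose $H_1$ have orders $|d(D)|$ and $d(\widehat E)$ by Mumford, produce the intermediate identity $|d(D)|\cdot|H_2(X',\Z)|^2=d(\widehat E)\cdot|H_1(X',\Z)|^2$ --- and it disposes of the extra factor by a non-formal argument: a rational $\Q$-acyclic normal surface is affine (Fujita), hence has the homotopy type of a CW-complex of real dimension $2$, so $H_2(X',\Z)$ is torsion-free, and being torsion it vanishes. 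If you add this step, your route closes: the rest of your argument is correct (the finiteness checks, the Euler count $\#\Gamma=\rho(\ov X)$ forcing $D$ connected, the identifications $L/M\cong H^{\mathrm{BM}}_2(X'_0)\cong H^2(X'_0,\Z)\cong H_1(X'_0,\Z)$, and the sequence $0\to\operatorname{coker}(j)\to H_1(X'_0)\to H_1(X)\to 0$ with $H_1(X)\cong H_1(X')$), and it is then a clean alternative to the paper's two-exact-sequence computation. As written, however, the ``key point'' you single out --- controlling $\im(j)$ on the nose rather than up to finite index --- is exactly the point left unproved.
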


\begin{proof} Let $M_D$ and $M$ be the boundaries of closures of tubular neighborhoods of $D$ and $\widehat E$. We may assume that $M_D$ and $M$ are disjoint oriented 3-manifolds. Since $\widehat E$ is a sum of rational trees, $H_1(\widehat E,\Q)=0$. By the $\Q$-acyclicity of $X'$ the components of $D+\widehat E$ freely generate $H_2(\ov X,\Q)$, so $d(D+\widehat E)\neq 0$. By \cite{Mumford} $H_1(M_D,\Z)$ and $H_1(M,\Z)$ are finite groups of orders respectively $|d(D)|$ and $d(\widehat E)$. By the Poincare duality $H_2(M_D,\Z)$ and $H_2(M,\Z)$ are trivial. Let $K=\ov X\setminus (Tub(D)\cup Tub(\E))$. By the Lefschetz duality $H_i(K,M_D)\cong H^{4-i}(K,M)=H^{4-i}(X',\Sing\ X'),$ which for $i>1$ implies that $H_i(K,M_D)\cong H^{4-i}(X')\cong H_{3-i}(X')$ by the universal coefficient formula. Thus the reduced homology exact sequence of the pair $(K,M_D)$ with $\Z$-coefficients gives: $$0\raa H_2(K)\raa H_1(X')\raa H_1(M_D)\raa H_1(K)\raa H_2(X')\raa 0.$$  On the other hand, since $H_i(K,M)\cong H_i(X',\Sing\ X')$ and $H_1(X',\Sing\ X')=H_1(X')\oplus \wt H_0(\Sing X')$, the reduced homology exact sequence of the pair $(K,M)$ gives: $$0\raa H_2(K)\raa H_2(X')\raa H_1(M)\raa H_1(K)\raa H_1(X')\raa 0.$$
From the two exact sequences we obtain $|H_2(K)|\cdot |d(D)|\cdot |H_2(X')|=|H_1(X')|\cdot |H_1(K)|$ and $|H_2(K)|\cdot d(\E)\cdot |H_1(X')|=|H_2(X')|\cdot |H_1(K)|$, hence $$|d(D)|\cdot |H_2(X',\Z)|^2=d(\E)\cdot |H_1(X',\Z)|^2.$$
Because a rational $\Q$-acyclic surface is necessarily affine by an argument of Fujita (\cite[2.4]{Fujita-noncomplete_surfaces}), $X$ is an affine variety, and hence has a structure of a CW-complex of real dimension $2$. It follows that $H_2(X',\Z)$ is torsionless, hence $H_2(X',\Z)=0$.

\end{proof}

Assume $A\subseteq \C^2$ is a smooth contractible planar curve.  Let $(\ov X,D)$ be a completion of $\C^2\setminus A$ defined in section \ref{sec:LZ}. We have $U=0$, so $D_3=E$. Assume that $D_1\neq 0$. Let $D_C$ be the component of $D_1$ meeting $C$. Because $D_1+C+D_2$ is a boundary of $\C^2$, its discriminant is $-1$.  By \eqref{eq:splitting_d} $$-1=d(D_1+C+D_2)=d(D_1)\cdot d(C+D_2)-d(D_1-D_C)\cdot d(D_2),$$ which gives $\gcd(d(D_1),d(D_2))=1$. Since $d(D_2)\geq 2$, we infer that $d(D_1)\neq 0$.

Suppose $E^2>0$. By blowing up on $E\setminus C$ we may replace it with a chain $\ti E+H+D_4$, where $\ti E$, the proper transform of $E$, is a $0$-curve, $H$ is a $(-1)$-curve and $D_4$ is a chain of $(-2)$-curves of length $E^2-1$. Now $\ti E$ induces a $\PP^1$-fibration of the constructed projective surface, such that $H$ is the unique horizontal component of $\ti B=D_1+D_2+\ti E+H+D_4$. Since $H$ and $D_1+D_2$ are disjoint, $\ti E$ is the unique fiber contained completely in $\ti B$. Then \eqref{eq:Sigma} gives $\Sigma_X=0$. It follows that every singular fiber $F$ contains a unique component $L_F$ not contained in $\ti B$ and this component is a unique $(-1)$-curve of $F$. Since $H$ is a section of the fibration, by Lemma \ref{lem:fibers} $L_F\cdot H = 0$ and $F-L_F$ has at most two connected components. One of these components (the one meeting $H$, necessarily non-empty) is contained in $D_4$. But $D_4$ is connected, so we see that there is at most one singular fiber. Then $D_1+D_2$ is contained in this fiber, hence it is connected, so $D_1=0$, in contradiction to the assumption.

Suppose $E^2<0$. Then $d(B)=d(D_1)d(D_2)d(E)\neq 0$, so the components of $B$ are independent in $H_2(\ov X,\Q)$, hence they generate freely the latter space. Let $X\to X'$ be the contraction of $E$ and $D_2$. We check using Lefschetz duality and standard exact sequences that $X'$ is $\Q$-acyclic. Applying Lemma \ref{lem:Q-acyclic} to $(\ov X,D_1)$ and $\widehat E=D_2+E$ we get that $d(D_2+E)$ divides $d(D_1)$. But $d(D_2+E)=d(D_2)\cdot d(E)$, so $d(D_2)$ divides $d(D_1)$; a contradiction.

Thus $E$ is a $0$-curve. Then $D_1$ and $D_2$ are vertical for the $\PP^1$-fibration of $\ov X$ induced by the linear system of $E$. Since $D_2$ is negative definite, we have $\nu\leq 2$. By \eqref{eq:Sigma} $\nu=2+\Sigma_X$, so $\nu=2$. This means that $D_1$, being snc-minimal, is a $0$-curve, so $d(D_1)=0$; a contradiction.

Therefore, we proved that $D_1=0$. As in the Claim 1 in the previous section we argue that $C^2=0$ and $D_2$ is irreducible. Then $\rho(\ov X)=2$, so $\ov X$ is a Hirzebruch surface. Again, after making some elementary transformation we may assume that $D_2^2=-1$. Then the contraction of $D_2$ maps $\ov X$ onto $\PP^2$ and $E$ onto a line. Theorem B follows.

\bibliographystyle{amsalpha}
\bibliography{bibl}
\end{document}